\documentclass[12pt]{article} 

\usepackage{amsmath,amsfonts,bm,amssymb}
\usepackage{hyperref}
\usepackage{url}
\usepackage{algorithm}
\usepackage{algpseudocode}
\usepackage{mathtools}
\usepackage{bbm}
\usepackage{enumitem}
\usepackage{subcaption}
\usepackage{graphicx}
\usepackage{xcolor}
\numberwithin{equation}{section}

\title{Stochastic global optimization  of continuous functions
via random walks on
Grassmannians }

\author{ Kartik Gupta ({\tt kartik.kailash.gupta@gmail.com}), \\
         Stephen D. Miller\thanks{Supported by NSF grant CNS-2124692.}  \ ({\tt stephen.miller@yu.edu}), \\
      Pradeep Ravikumar ({\tt pkr@andrew.cmu.edu}), and \\
         Ramarathnam Venkatesan ({\tt venkie@microsoft.com})\\
      }

\newtheorem {theorem}{Theorem}

\newtheorem {corollary}[theorem]{Corollary}

\newtheorem {lemma}[theorem]{Lemma}

\newenvironment {proof}[1][Proof]{\noindent \textbf {#1.} }{\ \rule {0.5em}{0.5em}}

\begin{document}

\date{}

\maketitle

\begin{abstract}
We introduce a stochastic global optimization method based on random walks on Grassmannian manifolds.
To minimize a continuous objective $\ell:\mathbb{R}^d\rightarrow\mathbb{R}$, the method repeatedly samples random $k$-dimensional linear subspaces (with $k\ll d$), solves the resulting low-dimensional restrictions of these problems to these subspaces using an arbitrary black-box optimizer, and updates the iterate (which monotonically improves upon the previous iterate).
Unlike classical optimization analyses that rely on convexity, smoothness, Lipschitz bounds, or Polyak--{\L}ojasiewicz-type conditions, our convergence guarantees depend only on the geometric distribution of restricted minima across the $k$-dimensional subspaces passing through a given point in $\mathbb{R}^d$.
We identify a \textit{gap parameter}---an analogue of a spectral gap for random walks---that controls the rate at which the iterates approach the global minimum value.
Finally, we argue that the same analysis yields a \emph{blind-spot robustness} property:~sufficiently narrow, deep dips of the loss function (small-measure regions where $\ell$ spikes downward) have limited influence on the algorithm's trajectory, since they are unlikely to be encountered by random subspace sampling.
\end{abstract}

\section{Introduction}
\label{intro}

\subsection{Motivation:~global optimization under minimal regularity}

Modern machine learning and scientific computing routinely give rise to optimization problems that are high-dimensional, highly non-convex, and geometrically intricate.
In many of these settings, classical structural assumptions used to obtain convergence guarantees, such as convexity, smoothness, global Lipschitz bounds, or Polyak--{\L}ojasiewicz-type conditions, are either false, unknown, or hard to justify from first principles.
At the same time, much of the existing theory for popular methods such as (stochastic) gradient descent and its variants is inherently \emph{local}:~it characterizes convergence to stationary points under regularity assumptions on gradients and Hessians, and it is not designed to provide global progress guarantees for general objective functions.
We have in mind very complicated loss functions, e.g.,  coming from deep neural networks, which are at best continuous and lack the convexity properties usually required to guarantee progress over iterations.

 This motivates the question we study here:~\emph{can one design a practically implementable stochastic procedure with a provable tendency toward global minima, under assumptions as weak as continuity?}
Of course, global optimization is hard in the worst case, even for simple model classes, and the present discussion is only meant to give an intuitive understanding of our approach.
Rather than attempting to overcome worst-case hardness for \emph{all} loss functions, our perspective is to obtain a guarantee whose rate depends on a measurable geometric property (the ``gap parameter'' (\ref{eq:gapdef})) of the objective function, and to interpret this property as a form of tractability.  Our results are meaningless when the gap parameter is zero, and most interesting when the gap parameter is sufficiently large.  The latter is necessary in order to avoid numerous mathematical and complexity-theoretic difficulties that can arise, even in the context of smooth functions (where the set of critical points can be extremely complicated, e.g., a Cantor set \cite[Exercise~5.21]{rudin}).

\subsection{Random subspaces and Grassmannian randomness}

A familiar strategy for high-dimensional optimization is to reduce dimension, for example through coordinate descent or block coordinate methods.
Such approaches, however, are tied to a particular choice of coordinates.
Ideally, one might instead desire the coordinate-independent viewpoint of considering quantities that are well-defined on a manifold.  To make this approach feasible, we take the given coordinate system the optimization problem has been posed in, and create a problem on a manifold as follows.  Specifically, we consider \emph{all} rotations of the coordinate axes at once, and optimize on random \emph{linear subspaces} that cross the coordinate grid at generic angles (not just ones aligned with any particular chosen coordinate system of $\mathbb{R}^d$).
The collection of all $k$-dimensional subspaces of $\mathbb{R}^d$ forms the Grassmannian $\mathcal G_{k,d}$, a smooth manifold  equipped with a canonical rotation-invariant probability measure.
This geometry provides a natural specification for choosing subspaces at random, which is independent of the choice of coordinate axes.

\subsection{A stochastic global optimization procedure}

Our method proceeds by sequentially solving the restriction of the original problem to random $k$-dimensional subspaces (with $k\ll d$), using an arbitrary black-box optimization routine in these low-dimensional subspaces.
Each successive subspace is sampled to \emph{contain} the current iterate, so that the loss function values produced by the walk are non-increasing.
The resulting algorithm (Algorithm~\ref{alg:ptw}) in Section~\ref{results} can be viewed as a random walk on  $\mathcal G_{k,d}$.

A key novelty is that our algorithm's access to the continuous loss function is as a black box that  lacks access to derivatives.
Instead, it depends only on the \emph{distribution of restricted minima} across random subspaces.
This leads to a convergence estimate for continuous loss functions in terms of a single quantity --- the \emph{gap parameter} (Definition~(\ref{eq:gapdef})) --- which plays a role analogous to the spectral gap controlling the mixing rate of random walks on graphs.

\subsection{Blind-spot robustness to narrow pathological regions}

Beyond convergence, the same perspective yields a useful robustness notion.
If the loss function has an extremely narrow region where it dips sharply downward, a ``spike'' of very low values supported on a set of tiny measure, then a method based on random subspace sampling is unlikely to encounter this region.
Section~\ref{robust} formalizes this idea by analyzing a clipped objective function $\ell_{\alpha'}:=\max(\ell,\alpha')$, showing that the algorithm effectively optimizes the loss function \emph{outside} such small-measure blind spots.
This robustness is subtle:~it does not claim immunity to arbitrary perturbations, but it does explain why certain highly concentrated pathologies have limited impact on the algorithm's output.

\subsection{Organization of this paper}

Section~\ref{relwork} discusses connections to existing stochastic and derivative-free optimization techniques.
Section~\ref{prelim} contains the mathematical background on compact groups, Haar measure, and Grassmannians.
In Section~\ref{results} we present the random-subspace walk and prove our main global convergence guarantee.
Section~\ref{robust} develops the blind-spot robustness viewpoint.
We conclude in Section~\ref{disc} with a discussion of implications and potential applications.

We wish to thank Henry Cohn,  Michael Freedman,  Ravi Kannan, Bhargav Narayanan, and Nikhil Srivastava for their helpful comments.

\section{Related Work}
\label{relwork}

\textbf{Comparison to existing stochastic techniques.}
Given the current emphasis on applications of optimization in machine learning, we present a review of techniques focused around applications to computer science.  However, optimization as a whole is of course a far broader topic that contains other methods such as basin-hopping that are decisive in other fields (see \cite{Wales} for applications to chemistry and physics).
Most of the existing literature focuses on stochastic gradient descent or its popular variants such as Adam \cite{Kingma2014AdamAM} and AdaGrad \cite{JMLR:v12:duchi11a}.
In stochastic gradient descent one picks a random subset of the data, and computes the gradient of the loss function on this subset to update the parameters of the model.
Convergence for this scheme can be shown under assumptions like strong convexity \cite{NIPS2011_40008b9a}, the Polyak--{\L}ojasiewicz condition \cite{DBLP:conf/aistats/GowerSL21}, or convergence to stationary points for non-convex functions which satisfy an expected smoothness assumption \cite{khaled2022better}.
These convergence results rely crucially on structural assumptions on the loss function and, by design, they emphasize \emph{local} stationarity rather than global optimality.

Our approach is fundamentally different in that, rather than drawing randomness from subsampling the data, we select random linear subspaces of the Euclidean space in which the optimization problem is posed.
This differs from coordinate descent in that random linear subspaces are typically not aligned with the coordinate grid; they cross it at generic angles.
It also differs from random-direction search in that we randomize the \emph{subspace in which a subproblem is posed and solved}, while treating the solver within that subspace as a black box.
The only existing technique with superficial similarities is Dropout in deep learning \cite{JMLR:v15:srivastava14a}, but Dropout typically considers subsets (not subspaces) of parameters, and  is specialized to neural networks.
By contrast, the set of all $k$-dimensional subspaces is a smooth manifold (hence uncountable), and its canonical probability measure comes from the rotation group.

\textbf{Analysis via compact-group geometry.}
Analytically, our results rely on the fact that randomness is drawn from a homogeneous space (a quotient of a compact Lie group) equipped with Haar probability measure.
This enables convergence guarantees for continuous objective functions in terms of global distributional properties of restricted minima, rather than derivative-based regularity conditions such as smoothness or Lipschitzness.
\section{Mathematical ingredients}
\label{prelim}

\subsection*{Geometric setup and invariant sampling}

\textbf{Grassmannians and ``conditioned Grassmannians''.} Fix integers $d>k>1$.
Let $\mathcal G_{k,d}$ denote the Grassmannian of $k$-dimensional \emph{linear} subspaces (``$k$-planes'') of $\mathbb{R}^d$  (see \cite{Bendokat_2024}).
For any nonzero vector $x\in\mathbb{R}^d$, we let
\begin{equation}\label{eq:Gkmdmx}
\mathcal G_{k,d,x} \ :=\ \{\eta\in\mathcal G_{k,d} : x\in \eta\}
\end{equation}
be the submanifold of $k$-planes that contain $x$.  We call these ``conditioned Grassmannians''; they are related to generalized flag varieties and Schubert varieties.
These compact homogeneous manifolds, equipped with rotation-invariant measures, provide the state spaces for our random-subspace walk.

\textbf{Group viewpoint.} Let $G$ denote a Lie group and $H\subset G$ a closed subgroup, with quotient $G/H$.
Our primary example is $G=O(d)$ and $H=O(k)\times O(d-k)$, embedded in the standard way so that $H$ preserves the coordinate subspace
$V:=\mathrm{span}(e_1,\ldots,e_k)\subset\mathbb{R}^d$.
Then the quotient $G/H$ can be identified with $\mathcal G_{k,d}$ via $gH\mapsto gV$.
This identification lets us translate probabilistic statements on $\mathcal G_{k,d}$ into statements on the compact group $O(d)$.

\textbf{Invariant measures and sampling.} Any locally compact Lie group admits a left-invariant Haar measure $dg$, unique up to scaling \cite{nachbin1976haar}.
After normalizing so that $G$ has measure 1, $dg$ becomes a probability measure when $G$ is compact.
Pushing $dg$ forward under $g\mapsto gV$ yields the canonical $O(d)$-invariant probability measure on $\mathcal G_{k,d}$.
Likewise, the ``conditioned Grassmannians'' $\mathcal G_{k,d,x}$ (\ref{eq:Gkmdmx}) inherit unique probability measures invariant under the stabilizer subgroup of $x$
(see e.g.~\cite[Chapter 1]{sepanski2006compact} and \cite{gallier2020differential} for quotient-measure constructions).
Throughout, the measure of a measurable subset $A$ under its ambient space's Haar measure will be denoted by $|A|$.

\textbf{The objective function $\phi$ and its restricted minima.} Let $\ell:\mathbb{R}^d\rightarrow\mathbb{R}$ be the continuous objective (loss) function we wish to minimize.  We tacitly assume $\ell$ is coercive, so that it attains minima on each $\eta\in \mathcal G_{k,d}$.
Our method queries $\ell$ only through low-dimensional restrictions:~given a $k$-plane $\eta$, we assume access to a black box that returns
(exactly or approximately) a minimizer of $\ell$ restricted to $\eta$.
To study the distribution of these restricted optima, we define
\begin{equation}
\label{eq:phidef}
\phi(\eta) \ = \ \min_{y\in \eta}\ell(y).
\end{equation}
Note that
\[
\min_{y\in\mathbb{R}^d}\ell(y)\ =\ \min_{\eta\in\mathcal G_{k,d}}\phi(\eta),
\]
i.e., the global minimum value of $\ell$ coincides with the global minimum of $\phi$.

\subsection{Probabilities for random values being near minima of functions on compact groups.}

Algorithm~\ref{alg:ptw} repeatedly samples random subspaces (elements of $\mathcal G_{k,d}$ or $\mathcal G_{k,d,x}$) and keeps the best among $T$ samples.
To analyze such steps, we need a quantitative statement of the following form:~if a bounded function on a compact homogeneous space is not almost constant under the invariant measure, then random sampling has a non-negligible chance of producing a value that is \emph{meaningfully below its maximum}.
When applied to $\phi(\eta)=\min_{y\in\eta}\ell(y)$, this becomes a lower bound on the probability of drawing a subspace whose restricted optimum is closer to the global optimum than it is to the maximal (worst) restricted optimum.

The lemma below is a simple variance-to-level-set bound on a compact Lie group equipped with Haar probability measure.
In its most transparent specialization, suppose $f:G\to\mathbb{R}$ has been rescaled so that its range is contained in $[0,1]$.
If the $L^2$-deviation $\|f-\bar f\|_2$ is bounded away from zero, then $f$ cannot concentrate near $1$ everywhere, and a random draw $g\sim dg$ has a positive chance of landing in a set where $f(g)\le 1-\delta$ for an explicit $\delta$.
Lemma~\ref{lem:measurebounds} records a slightly more flexible version that also allows an exceptional set $U$ on which $f$ dips below a threshold $\alpha'$; this flexibility is what we later exploit in our blind-spot robustness analysis (Section~\ref{robust}).

Concretely, in the special case $\alpha'=\alpha_\text{min}$ (so $U=\varnothing$) and after rescaling $f$ so that $\alpha_\text{min}=0$ and $\alpha_\text{max}=1$, the parameter in Lemma~\ref{lem:measurebounds} simplifies to $\delta=\tfrac{1}{\sqrt{2}}\|f-\bar f\|_2$, and (\ref{eq:measureboundspunchline}) becomes
\[
\mathbb{P}_{g\sim dg}\!\left[f(g)\le 1-\delta\right]\ \ge\ \delta^2.
\]
This provides an explicit way to read the lemma as a statement about the likelihood of sampling a value that is not ``near the top'' of the range.

Our presentation is motivated by (and can be derived from) Kazhdan's Property (T) \cite[\S3.1]{rogawski1994discrete}, which holds trivially for compact groups such as $G=O(d)$.

\begin{lemma}
\label{lem:measurebounds}  Let $G$ be a compact Lie group with Haar measure $dg$, normalized so that $|G|=1$.  Let $f:G\rightarrow \mathbb{R}$ be continuous with mean $\bar{f}=\int _G f(g)dg$.   Let $\alpha_\text{min}$ and $\alpha_\text{max}$ denote, respectively, the minimum and maximum values of $f$, let $\alpha' \in (\alpha_\text{min},\alpha_\text{max})$, and let
\begin{equation}\label{eq:Udef}
U = \{ g : f(g) < \alpha'\}.
\end{equation}
Assume that $\alpha'$ is chosen so that  $|U|< \frac{\|f-\bar{f}\|^2_2}{(\alpha_\text{max}-\alpha_\text{min})^2}$,
which ensures that
\begin{equation}\label{eq:deltaafterUdef}
\delta=\sqrt{\frac 12 \frac{\|f-\bar{f}\|^2_2}{(\alpha_\text{max}-\alpha')^2}-
\frac{|U|}{2}\frac{(\alpha_\text{max}-\alpha_\text{min})^2}{(\alpha_\text{max}-\alpha')^2}}
\end{equation}
exists and is positive.
Then
\begin{equation}\label{eq:measureboundspunchline}
\Big|\big\{ g  :  f(g)-\alpha'\le (1-\delta)(\alpha_\text{max}-\alpha')\big\}\Big| \ \ge \ \delta^2.
\end{equation}
That is, random values of $f$, with probability at least $\delta^2$, are not in the uppermost $\delta$-proportion of the range $[\alpha',\alpha_\text{max}]$.
\end{lemma}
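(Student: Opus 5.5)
The plan is a direct second-moment computation: bound the variance of $f$ above by its mean squared distance from $\alpha_\text{max}$, then split $G$ according to where $f$ lies. No deep input beyond the normalization $|G|=1$ is needed; the Property (T) motivation is not used explicitly. Write
\[
S=\big\{g : f(g)-\alpha'\le (1-\delta)(\alpha_\text{max}-\alpha')\big\}.
\]
The goal is $|S|\ge\delta^2$.

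\textbf{Step 1 (variance bound).} Since $\bar f$ minimizes $c\mapsto\int_G (f-c)^2\,dg$, we have
\[
\|f-\bar f\|_2^2\le\int_G(\alpha_\text{max}-f(g))^2\,dg .
\]

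\textbf{Step 2 (splitting the integral).} Split $G$ into the three disjoint pieces $U$, $S\setminus U$ and $G\setminus S$. Note $U\subseteq S$, because $f<\alpha'$ on $U$ and the threshold defining $S$ is at least $\alpha'$. On each piece, bound the nonnegative quantity $\alpha_\text{max}-f$ as follows.
\begin{itemize}
\item On $U$: $\alpha_\text{max}-f\le\alpha_\text{max}-\alpha_\text{min}$.
\item On $S\setminus U$: $f\ge\alpha'$, so $\alpha_\text{max}-f\le\alpha_\text{max}-\alpha'$.
\item On $G\setminus S$: $f>\alpha'+(1-\delta)(\alpha_\text{max}-\alpha')$, so $0\le\alpha_\text{max}-f<\delta(\alpha_\text{max}-\alpha')$.
\end{itemize}
Using $|S\setminus U|\le|S|$ and $|G\setminus S|\le 1$, this gives
\[
\|f-\bar f\|_2^2\le |U|(\alpha_\text{max}-\alpha_\text{min})^2+|S|(\alpha_\text{max}-\alpha')^2+\delta^2(\alpha_\text{max}-\alpha')^2 .
\]

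\textbf{Step 3 (rearranging).} Divide by $(\alpha_\text{max}-\alpha')^2$ and move the $U$-term to the left. By the definition (\ref{eq:deltaafterUdef}), the left-hand side is then exactly $2\delta^2$. Hence $2\delta^2\le|S|+\delta^2$, which is (\ref{eq:measureboundspunchline}).

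\textbf{Main obstacle.} The only delicate points are bookkeeping: checking that $U\subseteq S$, so that the three pieces are handled consistently, and that the bound on $G\setminus S$ uses nonnegativity of $\alpha_\text{max}-f$. The hypothesis on $|U|$ is used only to guarantee that $\delta$ is real and positive. One should also confirm $\delta\le 1$, so that $S$ is a meaningful sublevel set. This follows from $\|f-\bar f\|_2^2\le\int_G(\alpha_\text{max}-f)^2\,dg$ together with the same splitting, though it is not needed for the inequality itself.
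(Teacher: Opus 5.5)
Your proposal is correct and is essentially the paper's proof. The paper also bounds $\|f-\bar f\|_2^2\le\|\alpha_\text{max}-f\|_2^2$ (phrased via orthogonality of $f-\bar f$ to constants), splits $G$ into $U$, the middle band $\{\alpha'\le f<\alpha_\text{max}-\delta(\alpha_\text{max}-\alpha')\}$ and the top band, and compares the result with the definition of $\delta$. The only difference is cosmetic: the paper concludes that the middle band alone has measure at least $\delta^2$, which is slightly stronger than, and immediately implies, your bound on the full sublevel set.
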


\paragraph{How to read Lemma~\ref{lem:measurebounds}.}
Think of $G$ as a probability space and $f(g)$ as a random variable when $g$ is drawn from Haar measure.
The parameters $\alpha_\text{min}$ and $\alpha_\text{max}$ describe the range of $f$.
The set $U=\{g:f(g)<\alpha'\}$ isolates an exceptional region where $f$ takes unusually small values.
The conclusion (\ref{eq:measureboundspunchline}) lower bounds the probability mass of points where $f$ is \emph{not} in the very top of its range above $\alpha'$:~with probability at least $\delta^2$, a random sample satisfies
\[
f(g)\ \le\ \alpha_\text{max}-\delta(\alpha_\text{max}-\alpha').
\]
To connect with the comment above, in the special case $\alpha'=\alpha_\text{min}$ (so $U=\varnothing$), this says that random sampling reaches values a positive fraction of the way from the worst value $\alpha_\text{max}$ toward the best value $\alpha_\text{min}$.

\begin{proof}
Augmenting (\ref{eq:Udef}), let
\begin{equation}\label{eq:msbds1}
\aligned
    S  \ & = \  \{g : f(g) \ge \alpha_\text{max}-\delta(\alpha_\text{max}-\alpha')\} \ \ \text{and}\\
    T \ & = \ \{g : \alpha' \le f(g) <\alpha_\text{max}-\delta(\alpha_\text{max}-\alpha')\},
\endaligned
\end{equation}
so that $|S|+|T|+|U|=1$.
The key point is that $f-\bar{f}$, having mean zero, is orthogonal to all constants, including $\alpha_\text{max}-\bar{f}$.  Thus
\begin{equation}\label{eq:msbds2}
\aligned
\|f-\bar{f}\|_2^2 \ 
& \le \  \|\alpha_\text{max}-\bar{f}\|_2^2 \, + \, \|f-\bar{f}\|_2^2 \ = \
\|\alpha_\text{max}-f\|_2^2 \\
& \le \ \delta^2(\alpha_\text{max}-\alpha')^2 |S| +
(\alpha_\text{max}-\alpha')^2|T| + (\alpha_\text{max}-\alpha_\text{min})^2|U|
\\
& \le \ (\alpha_\text{max}-\alpha')^2(|T|+\delta^2) \, + \, |U|(\alpha_\text{max}-\alpha_\text{min})^2\,,
\endaligned
\end{equation}
where in the last step we have used $|S|\le 1$.
 Were $|T|<\delta^2$, then the right-hand side of (\ref{eq:msbds2}) would be less than the left-hand side; hence $|T|\ge \delta^2$, i.e., (\ref{eq:measureboundspunchline}).
\end{proof}

Lemma~\ref{lem:measurebounds} is stated in a fairly general form (in particular, allowing an exceptional set $U$).
This generality becomes useful later in the blind-spot robustness analysis (Section~\ref{robust}), where we want to ``clip away'' tiny regions on which the loss function dips far below its typical values.
For the main global-convergence analysis, however, we will invoke Lemma~\ref{lem:measurebounds} primarily through two simplifications:

\begin{enumerate}
\item \textbf{No exceptional set:} $\alpha'=\alpha_\text{min}$.
As mentioned above, this forces $U=\varnothing$ and (\ref{eq:deltaafterUdef}) simplifies to
\[
\delta\ =\ \frac{1}{\sqrt{2}}\frac{\|f-\bar{f}\|_2}{\alpha_\text{max}-\alpha_\text{min}}
\]
(the constant $\tfrac{1}{\sqrt{2}}$ can be improved).

\item \textbf{Passing to homogeneous spaces.}
In our applications, the relevant functions live on homogeneous spaces $G/H$ rather than on $G$ itself.
If $f$ is right-invariant under a closed subgroup $H$ of $G$, then it descends to a function on the quotient manifold $G/H$, and the same measure-theoretic conclusions hold with respect to the unique $G$-invariant probability measure on $G/H$
(this is the push-forward measure mentioned above; see also \cite[Theorem 1.9 and Remark p.93]{helgason2022groups}).
\end{enumerate}

In the setting of this paper we take $G=O(d)$ and $H=O(k)\times O(d-k)$, so $G/H\simeq \mathcal G_{k,d}$.
We will also need the ``conditioned Grassmannians'' $\mathcal G_{k,d,x}$ of $k$-planes constrained to contain a given nonzero vector $x$ defined in (\ref{eq:Gkmdmx}). They play an important role in our analysis, because each iteration of Algorithm~\ref{alg:ptw} samples $\eta$ from $\mathcal G_{k,d,x_i}$, the set of $k$-planes containing the current iterate $x_i$.
For fixed $x\neq 0$, $\mathcal G_{k,d,x}$ can be identified (non-canonically) with the lower-dimensional quotient manifold $O(d-1)/(O(k-1)\times O(d-k))$, reflecting the freedom to choose a $(k-1)$-plane inside the orthogonal complement of $x$.
In particular, both $\mathcal{G}_{k,d}$ and $\mathcal{G}_{k,d,x}$ are homogeneous spaces carrying unique invariant probability measures.

Applying Lemma~\ref{lem:measurebounds} to the restriction of $\phi$ to either $S=\mathcal G_{k,d}$ or $S=\mathcal G_{k,d,x}$ yields a constant $\delta$ (defined below) such that a single uniform sample has probability at least $\delta^2$ of producing a subspace $\eta$ with $\phi(\eta)$ bounded away from the largest value on $S$.
Taking the best among $T$ independent samples amplifies this success probability to $1-(1-\delta^2)^T$.
We record this amplified sampling guarantee as the following corollary.

\begin{corollary}\label{cor:submfld}

Let $S$ be equal to either $\mathcal G_{k,d}$ or one of  its submanifolds $\mathcal{G}_{k,d,x}$, for some nonzero $x\in \mathbb{R}^d$.  Let $M_S$ (resp., $m_S$), denote the maximum (resp., minimum) value of $\phi$ restricted to $S$, and let
\begin{equation}\label{eq:deltaS}
\delta \ \ = \ \frac{1}{\sqrt{2}}\frac{\big\|\,\phi|_S-
\overline{\phi|_S}\,\big\|_{S,2}}{M_S-m_S},
\end{equation}
where $\phi|_S$ denotes the restriction of $\phi$ to $S$, $\overline{\phi|_S}$ its average over $S$, and the norm is the $L^2$-norm on $S$.  Then given $T$ points $\eta\in S$ sampled uniformly at random with respect to $S$'s probability measure, the probability that least one will satisfy
\begin{equation}
\label{eq:corpunch}
\phi(\eta) - m_S \ \le  (1-\delta)\left(M_S-m_S\right)
\end{equation}
is at least $1-(1-\delta^2)^T$.
In particular, the value of $\phi$ on this $\eta$ will be bounded away from the maximum value $M_S$.
\end{corollary}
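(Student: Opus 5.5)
The plan is to reduce Corollary~\ref{cor:submfld} to the special case $\alpha'=\alpha_\text{min}$ of Lemma~\ref{lem:measurebounds}, transported to the homogeneous space $S$, and then amplify by independence. First realize $S$ as a quotient $G/H$ with $G$ compact: for $S=\mathcal G_{k,d}$ take $G=O(d)$, $H=O(k)\times O(d-k)$. For $S=\mathcal G_{k,d,x}$ take $G$ to be the stabilizer of $x$ in $O(d)$ (isomorphic to $O(d-1)$), acting transitively on $k$-planes containing $x$ with stabilizer $O(k-1)\times O(d-k)$. Define $f=\phi|_S\circ\pi$ on $G$, where $\pi:G\to S$ is the orbit map $g\mapsto g\eta_0$. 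Because the invariant probability measure on $S$ is the push-forward of Haar measure, $f$ and $\phi|_S$ have the same distribution. In particular they share the same mean, $L^2$ deviation, maximum $M_S$ and minimum $m_S$, and every level-set measure agrees.

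Lemma~\ref{lem:measurebounds} requires $f$ to be continuous, so I need $\phi$ continuous on $S$. This is the step I expect to be the main obstacle, since $\phi(\eta)=\min_{y\in\eta}\ell(y)$ is a minimum over a noncompact set. I would use the tacit coercivity: for $\eta$ in a neighborhood of $\eta_0$, all minimizers lie in a fixed ball $B_R$. Writing $\eta=g\eta_0$ gives
\[
\phi(g\eta_0)=\min_{v\in\eta_0\cap B_R}\ell(gv),
\]
a minimum over a fixed compact set of a function jointly continuous in $(g,v)$. Such a minimum is continuous in $g$. Since $\pi$ is a continuous open surjection, this yields continuity of $\phi|_S$. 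If this proves delicate, note that the proof of the lemma only uses measurability and boundedness, so upper semicontinuity, which holds trivially as an infimum of continuous functions, together with the attained extremes would also suffice.

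Next handle the degenerate case $M_S=m_S$. There $\delta$ is undefined, or trivially the statement $\phi-m_S\le (1-\delta)\cdot 0$ holds for every $\eta$, so assume $M_S>m_S$. Now apply the lemma with $\alpha'=\alpha_\text{min}=m_S$, reading it as the limiting or simplified case the paper describes. Here $U=\varnothing$, and the hypothesis $|U|<\|f-\bar f\|_2^2/(M_S-m_S)^2$ holds provided $f$ is nonconstant; a nonconstant continuous function has positive $L^2$ deviation. Formula (\ref{eq:deltaafterUdef}) then reduces to (\ref{eq:deltaS}). The proof of the lemma goes through verbatim with $U$ empty even though $\alpha'$ is now an endpoint of the range. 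The conclusion (\ref{eq:measureboundspunchline}) says that the set $A=\{\eta\in S:\phi(\eta)-m_S\le(1-\delta)(M_S-m_S)\}$ has measure $p\ge\delta^2$, after pushing forward along $\pi$.

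Finally, for $T$ independent uniform samples from $S$, the probability that none lies in $A$ is $(1-p)^T\le(1-\delta^2)^T$. Hence at least one sample satisfies (\ref{eq:corpunch}) with probability at least $1-(1-\delta^2)^T$. The closing remark follows because $\delta>0$ forces $\phi(\eta)\le M_S-\delta(M_S-m_S)<M_S$.
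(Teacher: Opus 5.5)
Your proposal is correct and follows the same route as the paper: realize $S$ as a quotient of a compact group, pull $\phi|_S$ back to that group, apply Lemma~\ref{lem:measurebounds} in the boundary case $\alpha'=\alpha_\text{min}$ (so $U=\varnothing$ and $\delta$ reduces to (\ref{eq:deltaS})), and amplify by independence over the $T$ samples. Your extra checks fill in details the paper leaves implicit: continuity of $\phi$ via coercivity (the ball $B_R$ can in fact be chosen globally, since $0\in\eta$ gives $\phi(\eta)\le\ell(0)$), the degenerate case $M_S=m_S$, and the observation that the lemma's proof still works at the endpoint $\alpha'=\alpha_\text{min}$.
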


The fundamental nature of this Lemma should be compared with results like the Markov inequality or the Chebyshev inequality, which instead give a non-trivial lower bound on the probability of getting a value close to the mean via random sampling.

Though it is overkill  to appeal to it for the analysis here, these sampling results were heavily motivated by Kazhdan's Property (T), a very important and extensively studied notion used in analyzing random walks on certain noncompact Lie groups \cite{bekka2008kazhdan} that holds trivially for all compact Lie groups.

\textbf{Generality of Lemma~\ref{lem:measurebounds}.}  Though our primary application is to the orthogonal group $G=O(d)$, there are other  compact groups (which also have Haar probability measures) of interest in the theory of optimization.  For example, one may take any finite group or the $n$-dimensional torus $(\mathbb{Z}\backslash\mathbb{R})^n$, which is --- issues of boundary identifications aside --- nearly the $n$-dimensional hypercube.

\section{Stochastic global optimization:~algorithm, guarantees, and examples}
\label{results}

In this section, we present our random-subspace walk, a stochastic global optimization procedure applicable to any continuous loss function, and we prove a global convergence guarantee for this general setting.

\subsection{Random-subspace walk}

Our objective is \emph{global} minimization:~we seek a point $x^*\in\mathbb{R}^d$ satisfying
\[
x^*\in\arg\min_{x\in\mathbb{R}^d}\ell(x).
\]
Many popular methods in modern non-convex optimization are analyzed through local stationarity conditions (e.g., small gradients) and therefore do not directly track progress toward the global minimum value.
In contrast, our random-subspace walk is built so that the analysis follows the gap $\ell(x_i)-\alpha_\text{min}$ \emph{itself}, yielding a global convergence guarantee under only continuity (cf.~Theorem~\ref{thm:conv}).
Of course, when run for limited time the method may return a point near a non-global local minimum; our robustness discussion later in the paper shed light on which local minima can realistically attract the walk.

Assume that we are given access to a black box  for solving (or at least approximating) the same optimization problem  restricted to a smaller dimensional $k$-plane $\eta\subset\mathbb{R}^d$, i.e., we can find an $x^*_\eta$ such that
\[x_\eta^*\in\arg\min\limits_{x\in\eta}\ell(x),\]
or at least a good approximation to one.
Our random walk attempts to use this   black box repeatedly, over some sequence of $k$-planes $\eta_1,\eta_2,\ldots$, to find a candidate for $x^*$. Among various other possibilities, a natural random walk is given as follows:~starting with some $x_1\in\mathbb{R}^d$, sample a random $k$-plane $\eta_1\in \mathcal{G}_{k,d,x_1}$ containing $x_1$;
at the $i$-th step, find some  $x_{i+1}\in \arg\min_{x\in \eta_i}\ell(x)$ (or an approximation to such a minimizer) and then
sample a random $k$-plane $\eta_{i+1}\in \mathcal{G}_{k,d,x_{i+1}}$;
 stop the algorithm after $N$ iterations. We state one such variant of this more formally as Algorithm \ref{alg:ptw}, with a concrete suggestion for the stopping rule's number of steps $N$.

\begin{algorithm}
\caption{Random-subspace walk (stochastic global optimization)}\label{alg:ptw}
\begin{algorithmic}[1]
\Require A continuous loss function $\ell:\mathbb{R}^d\rightarrow\mathbb{R}$, integers $d>k>1, T\ge 1$, accuracy parameter $\epsilon_a>0$, and a black-box $\mathcal B$ which takes $k$-planes $\eta$ as input and returns a minimizer of $\ell$ restricted to $\eta$ as output.
\State Set $x_0$ to be some fixed element of $\mathbb{R}^d$, e.g., 0
\State Sample $\eta^{(1)},\ldots,\eta^{(T)}$ independently and uniformly at random from $\mathcal G_{k,d}$
\State Choose any  $x_1 \in \arg\min_{y\in \{{\mathcal B}(\eta^{(1)}),\ldots,{\mathcal B}(\eta^{(T)})\}}\ell(y)$
\State Let $i=1$
\While{$\ell(x_{i-1})-\ell(x_{i})> \epsilon_a$}
    \State Sample $\eta^{(1)},\ldots,\eta^{(T)}$ uniformly from $\mathcal G_{k,d,x_{i}}$ (cf.~(\ref{eq:Gkmdmx}))
    \State Let $x_{i+1}=\arg\min_{y\in \{{\mathcal B}(\eta^{(1)}),\ldots,{\mathcal B}(\eta^{(T)})\}}\ell(y)$
    \State $i=i+1$
\EndWhile
\State \Return $x_{i}$
\end{algorithmic}
\end{algorithm}

This is a  natural random walk from the computational complexity theory perspective.   It leverages the ability to solve several lower-dimensional random problems when attempting to solve a higher-dimensional problem.

\subsection{Analysis of convergence properties of Algorithm \ref{alg:ptw}}
\label{subsec:conv-analysis}

We now study the convergence properties of Algorithm \ref{alg:ptw}.
Let $\phi$ be as defined in (\ref{eq:phidef}) and let  $\alpha_{\text{max}}$ (resp., $\alpha_\text{min}$) denote its global maximum (resp., minimum) value.
According to Corollary~\ref{cor:submfld} applied to $S=\mathcal{G}_{k,d}$, Step 3 succeeds
in finding some $\eta_1\in \mathcal{G}_{k,d}$ and an element $x_1\in \eta_1$ satisfying
\begin{equation}
    \label{eq:analysisinitial}
    \ell(x_1) - \alpha_{\text{min}} \ \le \ (1-\delta) \left(\alpha_\text{max} - \alpha_\text{min}  \right),
\end{equation}
with probability at least $\delta_T=1-(1-\delta^2)^T$, where
\begin{equation}\label{eq:deltadef}
\delta \ = \ \frac{1}{\sqrt{2}}\frac{\big\|\phi-\bar{\phi}\big\|_2}{\alpha_\text{max}
-\alpha_\text{min}}
\end{equation}
 and $\bar{\phi}$ is the mean of $\phi$.
At each iterative step, we apply Corollary~\ref{cor:submfld} to $S=\mathcal G_{k,d,x_i}$, with $\delta(x_i)$ as given in (\ref{eq:deltaS}), and $\delta(x_i)_T$ likewise denoting $1-(1-\delta(x_i)^2)^T$.  In this context, $\phi$'s minimum value $m_S$ on $S$ must equal the global minimum $\alpha_\text{min}$, because any vector in $\mathbb{R}^d$ (a global minimizer included) must lie inside some element of $S$. At the same time,
$\phi$'s maximum $M_S$ on $S$ cannot exceed  $\ell(x_i)$, because each $\eta\in S$ already contains $x_i$.
Thus with probability at least $\delta(x_i)_T$, the algorithm finds some $\eta_{i+1}\in \mathcal{G}_{k,d,x_i}$ and some $x_{i+1}\in \eta_{i+1}$ with
\begin{equation}
    \label{eq:analysisiter}
    \ell(x_{i+1}) - \alpha_{\text{min}} \ \le \ (1-\delta(x_i))\left(\ell(x_i)-\alpha_\text{min}  \right).
\end{equation}
We deduce the inequality
\begin{equation}
\label{eq:analysiscascade}
 \ell(x_{n+1}) - \alpha_{\text{min}} \ \le \ (1-\delta)\left( \prod_{i=1}^n(1-\delta(x_i))\right) (\alpha_\text{max}-\alpha_\text{min})
\end{equation}
by telescoping the inequalities  (\ref{eq:analysisinitial})-(\ref{eq:analysisiter}).

To simplify (\ref{eq:analysiscascade}), we define ``gap parameter'' of the loss function $\ell$ as
\begin{equation}
    \label{eq:gapdef}
    \Theta_\ell \ := \  \inf_x \frac{\bigg\|\phi|_{\mathcal{G}_{k,d,x}}-
    \overline{\phi|_{\mathcal{G}_{k,d,x}}}\bigg\|_{2,\mathcal{G}_{k,d,x}}}{\max(\phi|_{\mathcal{G}_{k,d,x}}) - \min(\phi|_{\mathcal{G}_{k,d,x}})}\,,
\end{equation}
where the infimum  is taken over all $x$ for which the restriction of $\phi(\eta):=\min_{x\in\eta}\ell(x)$ to $\eta\in\mathcal{G}_{k,d,x}$ is nonconstant, i.e., $x$ which are not already global minimizers of $\ell$.  Proving good lower bounds for $\Theta_\ell$ (or even that it is positive) is beyond the scope of the present paper, and likely depends on  specific information about the loss function $\ell$.
Therefore each $\delta(x_i)\ge \frac{1}{\sqrt{2}}\Theta_\ell$ and  the probability of success at each stage is at least $\delta(x_i)_T\ge \Theta_{\ell,T}$, where
\begin{equation}\label{eq:deltabiggerTheta}
\Theta_{\ell,T} \ = \  1-(1-\Theta_\ell^2/2)^T.
\end{equation}
We conclude:
\begin{theorem}
\label{thm:conv}
Let $\ell:\mathbb{R}^d\rightarrow\mathbb{R}$ be a continuous loss function with $\Theta_\ell>0$.  Then Algorithm~\ref{alg:ptw} returns, with probability  at least
$$\left(1-(1-\delta^2)^T\right) \left(1-(1-\Theta_\ell^2/2)^T \right)^n,$$
a point $x_{n+1}\in \mathbb{R}^d$ satisfying
\begin{equation}\label{eq:thmalganalysispunch}
\ell(x_{n+1})-\alpha_\text{min} \ \le \ (1-\delta)(1-2^{-1/2}\Theta_\ell)^n (\alpha_\text{max}-\alpha_\text{min}).
\end{equation}
In particular, the successive differences of loss values satisfy the bounds
\begin{equation}\label{eq:successiveloss}
\aligned
  \ell(x_{n})-\ell(x_{n+1}) \ & = \  (\ell(x_n)-\alpha_\text{min})-(\ell(x_{n+1})-\alpha_\text{min}) \\
  & \le \  \ \ell(x_n)-\alpha_\text{min} \\
  & \le \  (1-\delta)  (1-2^{-1/2}\Theta_\ell)^{n-1}(\alpha_\text{max}-\alpha_\text{min}),
\endaligned
\end{equation}
so that Algorithm~\ref{alg:ptw} terminates within $n=\Big\lceil \frac{\log\big(\frac{\epsilon_a}{(1-\delta) (\alpha_\text{max}-\alpha_\text{min}) }\big)}{\log(1-2^{-1/2}\Theta_\ell)}\Big\rceil+1$ iterations.  Moreover, if
\begin{equation}\label{eq:Tbd}
T \ = \ \Big\lceil \max\left(\frac{\log(1-\frac{1}{\sqrt{2}})}{\log(1-\delta^2)},
\frac{\log(1-2^{-1/(2n)})}{\log(1-\Theta_\ell^2/2)} \right)\Big\rceil,
\end{equation}
the probability of success is at least 1/2.

\end{theorem}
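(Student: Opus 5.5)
The plan is to chain the per-step guarantees of Corollary~\ref{cor:submfld} through a conditioning argument, using the Markov property of the walk. Define the success event $E_0$ that Step 3 produces $x_1$ satisfying (\ref{eq:analysisinitial}). For $i\ge1$, let $E_i$ be the event that the $i$-th iteration produces $x_{i+1}$ satisfying (\ref{eq:analysisiter}).

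By Corollary~\ref{cor:submfld} with $S=\mathcal G_{k,d}$, we have $\mathbb P(E_0)\ge 1-(1-\delta^2)^T$. Conditional on the history up to $x_i$, the $T$ samples in step $i$ are fresh and uniform on $\mathcal G_{k,d,x_i}$. Corollary~\ref{cor:submfld} applies to $S=\mathcal G_{k,d,x_i}$, and the observations preceding (\ref{eq:analysisiter}) give $m_S=\alpha_\text{min}$ and $M_S\le \ell(x_i)$. Hence
\[
\mathbb P(E_i\mid x_1,\dots,x_i)\ \ge\ 1-(1-\delta(x_i)^2)^T\ \ge\ 1-(1-\Theta_\ell^2/2)^T,
\]
since $\delta(x_i)\ge 2^{-1/2}\Theta_\ell$ by the definition (\ref{eq:gapdef}). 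Two degenerate cases need care. If $x_i$ is already a global minimizer, the bound (\ref{eq:analysisiter}) holds trivially, since the left side is $0$ by monotonicity. If instead $\phi|_S$ is nonconstant but $M_S<\ell(x_i)$, the inequality only improves. Multiplying the conditional probabilities (tower property) gives $\mathbb P(E_0\cap\cdots\cap E_n)$ at least the stated product.

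On this intersection, telescoping yields (\ref{eq:analysiscascade}), and replacing each factor $1-\delta(x_i)$ by $1-2^{-1/2}\Theta_\ell$ gives (\ref{eq:thmalganalysispunch}). The chain (\ref{eq:successiveloss}) is immediate from $\ell(x_{n+1})\ge\alpha_\text{min}$ and the bound at index $n-1$.

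For termination, I will require the right side of (\ref{eq:successiveloss}) to be at most $\epsilon_a$. Solving for $n$ (dividing by the negative number $\log(1-2^{-1/2}\Theta_\ell)$ flips the inequality) gives the stated ceiling plus one. Then the while-condition $\ell(x_{i-1})-\ell(x_i)>\epsilon_a$ fails by that index.

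The main obstacle is the probability bookkeeping. The number of iterations actually executed is random, so the argument must make sense even if the loop stops early. I would handle this by running the analysis on a coupled infinite walk and noting that stopping only truncates it. Second, with the choice (\ref{eq:Tbd}), $T\ge \log(1-2^{-1/2})/\log(1-\delta^2)$ gives $(1-\delta^2)^T\le 1-2^{-1/2}$, so the first factor is at least $2^{-1/2}$. Likewise $(1-\Theta_\ell^2/2)^T\le 1-2^{-1/(2n)}$, so each remaining factor is at least $2^{-1/(2n)}$ and their $n$-fold product is at least $2^{-1/2}$. The total success probability is therefore at least $2^{-1/2}\cdot 2^{-1/2}=1/2$.
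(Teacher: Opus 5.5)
Your proposal is correct and follows the paper's own argument. It applies Corollary~\ref{cor:submfld} on $\mathcal G_{k,d}$ for Step 3 and then on each $\mathcal G_{k,d,x_i}$ with $m_S=\alpha_\text{min}$ and $M_S\le\ell(x_i)$, telescopes to (\ref{eq:analysiscascade}), and bounds $\delta(x_i)\ge 2^{-1/2}\Theta_\ell$ via (\ref{eq:gapdef}); your tower-property bookkeeping, the coupled-walk reading of ``returns $x_{n+1}$'', and the explicit checks of the iteration count and of the choice (\ref{eq:Tbd}) make precise what the paper leaves implicit. Your case split misses one situation: $\phi|_{\mathcal G_{k,d,x_i}}$ constant although $x_i$ is not a global minimizer, which happens exactly when every $k$-plane through $x_i$ contains a global minimizer; this case is also trivial, since then $\phi|_{\mathcal G_{k,d,x_i}}\equiv\alpha_\text{min}$ and every sampled plane already yields the global minimum.
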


{\bf Remarks:}


\begin{enumerate}
    \item Unlike analysis of algorithms in the non-convex optimization literature that track consecutive iterates $\ell(x_{n})-\ell(x_{n+1})$, and use local regularity properties of $\ell$ to show they are small, our analysis is instead  {\it non-local} in the sense that at each step we directly track progress towards the global minimum $\alpha_\text{min}$, thus only indirectly  obtaining the bound on iterates in (\ref{eq:successiveloss}) as a byproduct of (\ref{eq:thmalganalysispunch}).  Furthermore, the search space for consecutive iterates $\{x_i\}$ is in principal global, rather than simply nearby previous iterates.
    \item The only regularity assumption made on the loss function $\ell$ is continuity.  In particular, we do not require any bounds on the Lipschitz constant of $\ell$, as is the case in the analysis of convergence of algorithms such as gradient descent.  This is because the relevant quantities such as the ``gap parameter'' $\Theta_\ell$ from (\ref{eq:gapdef}) are defined through integrals rather than derivatives.
    \item The gap parameter  $\Theta_\ell$ is defined in terms of the ratio of two ways to measure the variation of a function:~the $L^2$-distance  from its mean, and the largest spread among its values.  Both quantities are nonzero for nonconstant continuous functions, and each  remains unchanged if the mean is subtracted from the function.  We were very motivated by Kazhdan's Property (T) because of its formulations in terms of harmonic analysis on spaces of mean-zero functions, and our results can be derived from Property (T) since the latter automatically holds  for compact groups.
    \item There are parallels between the  gap parameter $\Theta_\ell$ and the spectral gap of the laplacian on a graph.  One of the very important and early applications of Kazhdan's Property (T) was the first explicit construction of  expander graphs in \cite{margulis1973explicit}. By virtue of their spectral gap (the difference between the largest two eigenvalues of the graph adjacency matrix), expander graphs have very good mixing properties, i.e., a random walk on one quickly becomes evenly distributed  across the graph \cite{rogawski1994discrete}.  This can be understood in terms of how the adjacency matrix of an expander graph shrinks functions orthogonal to constants, which connects to the previous remark.  The gap parameter dictates convergence for a similar reason (compare (\ref{eq:successiveloss}) with \cite[Lemma~2.1]{DBLP:conf/ants/MillerV06} and its proof).
        This common source of operating principles is the reason why we call $\Theta_\ell$ the gap parameter of $\ell$.
\end{enumerate}

\subsection{Experimental results}\label{sec:experiments}

In order to test the ability of Algorithm~\ref{alg:ptw} to solve optimization problems, we performed some experiments on J.J.~Thomson's classic problem of minimizing the Coulomb potential over all configurations of $N$ points on the unit sphere in $\mathbb{R}^{n+1}$ \cite{Cohn1}.  We selected this problem because it has been heavily studied over the last century, and is notorious for having a complicated landscape of myriad local minima \cite{Wales}.

We applied stereographic projection to convert to a minimization problem over $\mathbb{R}^{Nn}$.  Although we make no claims that Algorithm~\ref{alg:ptw} has superior speed to known approaches, we did find that  we could achieve known records on this problem \cite{Cohn2} provided adequate CPU time --- even with various choices of the ``black-box'' solver in $k$-dimensional subspaces.

The details of the implementation reported on here are as follows.  We start from a random point in $\mathbb{R}^n$ selected by independently sampling each coordinate from the normal distribution with mean zero and variance one.  At step $i$, we choose $k-1$ vectors at random from the same Gaussian distribution on $\mathbb{R}^n$, that together with $x_i$ span a $k$-plane $\eta_i$.  Next, we perform the following variant of random descent within $\eta_i$, starting from $x_i$:~for $j=1,\ldots,m$ we compute a random vector  of length $2^{-j}$ in $\mathbb{R}^k$, map it to a vector $v\in\eta_i$ using the chosen spanning basis, and move by $v$ if it lowers the value of the loss function.  This results in a candidate minimizer $x_{i+1}$, and the algorithm then repeats at step $i+1$.

Algorithm~\ref{alg:ptw} performed well in many situations, the following two being the most notable.

\bigskip
\noindent
{\bf 42 points in $\mathbb{R}^7$:} Using the values $k=150$, $m=20$, and $T=1$, we found a configuration with Coulomb energy $\approx227.410$, which ties the existing record in \cite{Cohn2} that was announced in 2021, and is smaller than the previous record of $\approx227.436$ set in \cite{Cohn1}.  This demonstrates that Algorithm~\ref{alg:ptw} can be competitive in quality with existing approaches.

\bigskip
\noindent {\bf Robustness in $\mathbb{R}^4$:} The optimal Coulomb energy for $N=120$ points in $\mathbb{R}^4$ is known be exactly 5395.  In \cite[Table 11]{Cohn1} the authors perform a fascinating study of how likely it is that their optimization method achieves various local minima.  In particular, they found that their algorithm got stuck at different local, non-global minima in 6.8\% of their 200,000 trials.

We performed the same experiment, but using Algorithm~\ref{alg:ptw} with the parameter choices $k=4$, $m=50$, and $T=1$, and letting the program run for exactly 24 hours of wall time on the Rutgers Amarel High Performance Cluster (approximately $1.5\times 10^7$ iterative steps).  In our 1,000 trials, only 1.9\% got stuck at local minima other than the global minimum.  Thus Algorithm~\ref{alg:ptw} cut the rate of failure to achieve the global minimum by over a factor of three.

\section{Blind-spot robustness}
\label{robust}

Theorem~\ref{thm:conv} shows that the iterations of Algorithm~\ref{alg:ptw} converge towards the global minimum $\alpha_\text{min}$ of the loss function $\ell(\cdot)$, at a rate controlled by the gap parameter $\Theta_\ell$ (cf.~(\ref{eq:thmalganalysispunch})).  However, in practice (given time constraints) the algorithm might yield an output near a different local minimum.  One particular such scenario is when $\ell(\cdot)$ spikes dramatically downward on a set of small measure, which would be a blind spot for random sampling.  In this section we analyze this scenario, in which one is only concerned with minimizing the loss function away from such minuscule, invisible subsets.

Consider a value $\alpha'$ between  the extreme values $\alpha_\text{min}$ and $\alpha_\text{max}$ of the function $\phi$ defined in (\ref{eq:phidef}), all three of which may be unknown.  We define an auxiliary loss function $\ell_{\alpha'}:=\max(\ell, \alpha')$, which is itself continuous (note that even if $\ell$ is smooth, $\ell_{\alpha'}$ may fail to inherit differentiability at the level set $\ell=\alpha'$, but this is not relevant to the present analysis).
By applying Theorem~\ref{thm:conv} to the loss function $\ell_{\alpha'}$ instead of $\ell$, we deduce the following estimates:

\begin{theorem}
\label{thm:convwithblindspot}
Let $\ell:\mathbb{R}^d\rightarrow\mathbb{R}$ be a continuous loss function,  let $\Theta_{\ell_{\alpha'}}$ denote the gap parameter of the loss function $\ell_{\alpha'}:=\max(\ell, \alpha')$ for some $\min(\ell)\le \alpha'\le \alpha_\text{max}$, and let
$\delta_{\alpha'}:=\frac{1}{\sqrt{2}}\frac{\| \phi_{\alpha'}-\overline{\phi_{\alpha'}} \|_2}{\alpha_\text{max}-\alpha'}$, where $\phi_{\alpha'}(\eta):=\min_{x\in\eta}\ell_{\alpha'}(x)=\min(\phi(\eta),\alpha')$ in analogy with (\ref{eq:phidef}), and $\overline{\phi_{\alpha'}}$ is its mean over ${\mathcal G}_{k,d}$.  Then the $n$-th round of the iteration described in Algorithm~\ref{alg:ptw} returns, with probability at least
$$\left(1-(1-(\delta_{\alpha'})^2)^T\right) \left(1-(1-(\Theta_{\ell_{\alpha'}})^2/2)^T \right)^n,$$
a point $x_{n+1}\in \mathbb{R}^d$ satisfying
\begin{equation}\label{eq:thmalganalysispunchB}
\ell(x_{n+1})-\alpha' \ \le \ (1-\delta)(1-2^{-1/2}\Theta_{\ell_{\alpha'}})^n (\alpha_\text{max}-\alpha'
),
\end{equation}
for all $n\in\mathbb{N}$.
\end{theorem}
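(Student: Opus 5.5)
The plan is to deduce the statement directly from Theorem~\ref{thm:conv}, applied to the clipped loss $\ell_{\alpha'}=\max(\ell,\alpha')$. The one genuinely new ingredient is a coupling argument. It shows that the iterates Algorithm~\ref{alg:ptw} produces when run on $\ell$ are, step by step, a legitimate run of the same algorithm on $\ell_{\alpha'}$. We ignore the stopping rule and analyze the $n$-th round directly, as the statement does.

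First I will record the basic properties of the clipped objects. $\ell_{\alpha'}$ is continuous and coercive whenever $\ell$ is. Its restricted-minimum function is
\[
\eta\mapsto\min_{x\in\eta}\max(\ell(x),\alpha')=\max(\phi(\eta),\alpha').
\]
I read the definition of $\phi_{\alpha'}$ in the statement this way, with the intended clipping from below. Hence the range of $\phi_{\alpha'}$ on $\mathcal G_{k,d}$ is $[\alpha',\alpha_\text{max}]$ when $\alpha'\le\alpha_\text{max}$. So in Corollary~\ref{cor:submfld}, applied to $S=\mathcal G_{k,d}$, the quantity (\ref{eq:deltaS}) becomes exactly $\delta_{\alpha'}$. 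For the iterative steps, the definition (\ref{eq:gapdef}) applied to $\ell_{\alpha'}$ gives $\delta(x_i)\ge 2^{-1/2}\Theta_{\ell_{\alpha'}}$ for every $x_i$ that is not already a global minimizer of $\ell_{\alpha'}$. If some $x_i$ is such a minimizer, then $\ell_{\alpha'}(x_i)=\alpha'$ and the bound is trivial from then on by monotonicity.

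Second comes the coupling, which I expect to be the main point needing care. Let $\eta$ be a $k$-plane and let $y=\mathcal B(\eta)$ be a minimizer of $\ell$ on $\eta$. Because $t\mapsto\max(t,\alpha')$ is nondecreasing, $y$ also minimizes $\ell_{\alpha'}$ on $\eta$. Likewise, choosing the candidate of smallest $\ell$ among the $T$ candidates in Steps~3 and~7 also selects a candidate of smallest $\ell_{\alpha'}$, since monotone maps preserve argmins, possibly among ties. Therefore the sequence $x_1,x_2,\ldots$ generated on $\ell$ is a run of Algorithm~\ref{alg:ptw} on $\ell_{\alpha'}$ with the black box $\mathcal B$ and a particular tie-breaking rule. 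The random planes are sampled from the same invariant measures on $\mathcal G_{k,d}$ and $\mathcal G_{k,d,x_i}$ in both runs. Since the proof of Theorem~\ref{thm:conv} uses only the sampling guarantee of Corollary~\ref{cor:submfld} and the fact that the returned point attains $\phi$ on its plane, and never uses any particular tie-breaking, it applies verbatim to this run.

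Third, I will invoke Theorem~\ref{thm:conv} for $\ell_{\alpha'}$, with $\alpha_\text{min}$ replaced by $\alpha'$ and $\delta$ replaced by $\delta_{\alpha'}$. With probability at least
\[
\bigl(1-(1-\delta_{\alpha'}^2)^T\bigr)\bigl(1-(1-\Theta_{\ell_{\alpha'}}^2/2)^T\bigr)^n
\]
this gives
\[
\ell_{\alpha'}(x_{n+1})-\alpha'\le(1-\delta_{\alpha'})(1-2^{-1/2}\Theta_{\ell_{\alpha'}})^n(\alpha_\text{max}-\alpha').
\]
The $\delta$ appearing in (\ref{eq:thmalganalysispunchB}) should be read as $\delta_{\alpha'}$. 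Finally, $\ell\le\ell_{\alpha'}$ pointwise, so the left side dominates $\ell(x_{n+1})-\alpha'$, which yields (\ref{eq:thmalganalysispunchB}).

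The degenerate cases will be handled separately. If $\alpha'=\alpha_\text{max}$, the right-hand side is $0$ and the claim $\ell(x_{n+1})\le\alpha_\text{max}$ holds trivially. If $\Theta_{\ell_{\alpha'}}=0$, the probability bound is vacuous.
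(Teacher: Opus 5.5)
Your proposal is correct and follows the paper's own route: the paper's entire argument is the remark that one applies Theorem~\ref{thm:conv} to $\ell_{\alpha'}$. Your coupling observation makes explicit what that one line leaves implicit, namely that a minimizer of $\ell$ on a plane, or among the $T$ candidates, is also a minimizer for $\ell_{\alpha'}$, so the run on $\ell$ is a run on $\ell_{\alpha'}$. The same holds for your final comparison $\ell\le\ell_{\alpha'}$ and your corrected readings $\phi_{\alpha'}=\max(\phi,\alpha')$ and $\delta=\delta_{\alpha'}$.

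There is one harmless imprecision, inherited from the paper's ``i.e.'' after (\ref{eq:gapdef}). The restriction of $\phi_{\alpha'}$ to $\mathcal G_{k,d,x_i}$ can be constant even when $x_i$ does not minimize $\ell_{\alpha'}$, in which case $\delta(x_i)$ is undefined. That constant must then equal $\alpha'$, however, so every sampled plane already attains $\alpha'$ and that step succeeds with certainty.
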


Theorem~\ref{thm:convwithblindspot} specializes to Theorem~\ref{thm:conv} when $\alpha'= \alpha_{\text{min}}:=\min(\ell)$, and is meaningless for $\alpha\ge \alpha_{\text{max}}$.  It is therefore important to understand how the quantities $\Theta_{\ell_{\alpha'}}$ and $\delta_{\alpha'}$, which are defined   in terms of ratios of $L^2$-norms to $L^\infty$-norms (e.g., in (\ref{eq:gapdef})), vary as $\alpha'$ increases from $\alpha_\text{min}$ to $\alpha_\text{max}$.  The denominator is of course  nonincreasing in $\alpha'$.  At the same time, the numerators -- being  {\it averages} over many values -- might not change much if the set
$$\{\eta \in \mathcal G_{k,d}  \mid  \phi(\eta)<\alpha' \}$$
is tiny.  We conclude that when the loss function has sufficiently steep dips below $\alpha'$, we can expect that the constants in Theorems~\ref{thm:conv} and Theorem~\ref{thm:convwithblindspot} to be fairly similar.  This can be quantified and extended using the full formulation  of Lemma~\ref{lem:measurebounds}, which relates the effect of the parameter $\alpha'$ and the size of the set (\ref{eq:Udef}) to $\delta$ in a way very similar to its use in analyzing Algorithm~\ref{alg:ptw}.

When the clipped loss function $\ell_{\alpha'}$ differs from $\ell$ on only a small set, a random walk may have difficulty distinguishing them.  Blindspot robustness is, in spirit, closely related to the set-hitting and set-avoidance problems that arise in the study of random walks on expander graphs, with applications in cryptography and random number generation. Let $A$ be a measurable subset whose measure is proportion $\mu$ of the full space's measure. Given a random walk of length $L$, the empirical occupation measure of $A$ is defined as $\frac{m}{L}$, where $m$ denotes the number of times the walk visits the set $A$. For symmetric random walks on expander graphs, it is known (see, e.g., \cite{Gillman}) that the occupation measure concentrates sharply around $\mu$:~specifically, Chernoff-type bounds hold for the deviation of $\frac{m}{L}$ from $\mu$.
A symmetric walk can loop back to earlier steps, and  converge to the uniform distribution.  However, in many naturally arising applications the walk needs be asymmetric (e.g., the Pollard Rho walk for discrete logarithms \cite{DBLP:conf/ants/MillerV06}) and has no short cycles; adapting  Chernoff bounds is  non-trivial. PTW walks do not achieve uniform distribution since they converge to optima, and almost never loop, so a  proof technique must be fundamentally different.

\subsection{Potential application:~adversarial data poisoning}

One setting where blind-spot robustness may be relevant is \emph{adversarial data poisoning}:~an attacker modifies or injects a small fraction of the training data with the goal of steering learning toward a model with degraded (or targeted) test-time behavior.
A large literature studies poisoning attacks and defenses in modern machine learning; see \cite{10.1145/3551636,10.1145/3585385} for surveys.

The clipped-loss viewpoint above suggests a complementary, geometry-driven mechanism.
Many poisoning and backdoor constructions can be interpreted as creating a region of parameter space where the \emph{empirical} loss function dips anomalously low because it overfits a tiny corrupted subset, while remaining comparatively higher elsewhere.
If this ``too-good-to-be-true'' basin is highly concentrated (small measure under the random-subspace exploration distribution induced by $\mathcal G_{k,d}$ and $\mathcal G_{k,d,x}$), then the walk is unlikely to encounter it, and Theorem~\ref{thm:convwithblindspot} implies that optimizing the clipped objective function $\ell_{\alpha'}$ behaves similarly to optimizing the original loss function outside that basin.

This should be interpreted as a mechanistic robustness guarantee rather than a worst-case security statement.
Our results do \emph{not} provide general immunity to poisoning:~an adversary could in principle create a broad region of deceptively low loss, or otherwise alter the loss landscape on a non-negligible fraction of random subspaces, in which case the blind-spot assumption is violated.
Understanding when a concrete poisoning model leads to a genuinely small-measure downward spike for the induced loss landscape is an interesting direction for future work, and likely requires combining our geometric viewpoint with robust-learning and robust-statistics techniques \cite{prasad2020robust,10.1145/3055399.3055491}.

Finally, note that this effect may become more pronounced for smaller values of $k$:~a perturbation that only ``rewards'' a very low-dimensional cone of directions is harder to encounter when the walk explores through random, low-dimensional $k$-planes.

\section{Discussion}
\label{disc}

In this paper, we have presented a stochastic \emph{global} optimization algorithm built from random walks on Grassmannians.
The method repeatedly solves low-dimensional random restrictions of a given objective function and monotonically improves the loss function value.
Our main convergence theorem provides a guarantee whose only  regularity requirement on the objective function is continuity.
The analysis highlights a single quantity --- the \emph{gap parameter} $\Theta_\ell$ --- as controlling the rate of progress, in a way reminiscent of how spectral gaps control the behavior of random walks on graphs.
Understanding when $\Theta_\ell$ is bounded away from zero, and how it behaves for concrete model classes, is an important direction for future work.

A second theme is robustness.
Section~\ref{robust} shows that the same analysis naturally supports a \emph{blind-spot} viewpoint:~if the loss  function contains extremely narrow regions where it dips far below its typical values, then these regions may have limited influence on an algorithm that explores the landscape through random subspace sampling.
This perspective does not replace classical notions of robustness (e.g., uniform stability or worst-case adversarial guarantees), but it captures a failure mode that is common in complex non-convex landscapes:~``too-good-to-be-true'' minima supported on vanishingly small regions.

Finally, apart from optimization and robustness, the extensive use of randomness in our walk suggests potential privacy-related properties; studying this connection is another interesting direction.
Given the continued growth of large-scale, non-convex optimization problems across machine learning and beyond, we believe it is important to develop algorithms that have guarantees under minimal assumptions, alongside interpretations that reflect the geometry of modern objective function landscapes.
\begingroup
\small
\newcommand{\etalchar}[1]{$^{#1}$}

\endgroup


\begin{thebibliography}{CGD{\etalchar{+}}23}



\bibitem[BBC{\etalchar{+}}09]{Cohn1} Ballinger, B., Blekherman, G., Cohn, H., Giansiracusa, N., Kelly, E., \& Schürmann, A. (2009). Experimental Study of Energy-Minimizing Point Configurations on Spheres. Experimental Mathematics, 18(3), 257–283. https://doi.org/10.1080/10586458.2009.10129052

\bibitem[BBC{\etalchar{+}}21]{Cohn2} Ballinger, B., Blekherman, G., Cohn, H., Giansiracusa, N., Kelly, E., \& Schürmann, A..  Point configurations minimizing harmonic energy on spheres.  MIT Dataset (2021), https://hdl.handle.net/1721.1/130937

\bibitem[BdlHV08]{bekka2008kazhdan}
B.~Bekka, P.~de~la Harpe, and A.~Valette.
 {\em Kazhdan's Property (T)}.
 New Mathematical Monographs. Cambridge University Press, 2008.

\bibitem[BZA24]{Bendokat_2024}
Thomas Bendokat, Ralf Zimmermann, and P.-A. Absil.
 A Grassmann manifold handbook:~basic geometry and computational
  aspects.
 {\em Advances in Computational Mathematics}, 50(1), January 2024.

\bibitem[CGD{\etalchar{+}}23]{10.1145/3585385}
Antonio~Emanuele Cin\`{a}, Kathrin Grosse, Ambra Demontis, Sebastiano Vascon,
  Werner Zellinger, Bernhard~A. Moser, Alina Oprea, Battista Biggio, Marcello
  Pelillo, and Fabio Roli.
\newblock Wild patterns reloaded: A survey of machine learning security against
  training data poisoning.
\newblock {\em ACM Comput. Surv.}, 55(13s), Jul 2023.

\bibitem[CSV17]{10.1145/3055399.3055491}
Moses Charikar, Jacob Steinhardt, and Gregory Valiant.
\newblock Learning from untrusted data.
\newblock In {\em Proceedings of the 49th Annual ACM SIGACT Symposium on Theory
  of Computing}, STOC 2017, page 47–60, New York, NY, USA, 2017. Association
  for Computing Machinery.

\bibitem[DHS11]{JMLR:v12:duchi11a}
John Duchi, Elad Hazan, and Yoram Singer.
\newblock Adaptive subgradient methods for online learning and stochastic
  optimization.
\newblock {\em Journal of Machine Learning Research}, 12(61):2121--2159, 2011.

\bibitem[GQ20]{gallier2020differential}
J.~Gallier and J.~Quaintance.
\newblock {\em Differential Geometry and Lie Groups: A Computational
  Perspective}.
\newblock Geometry and Computing. Springer International Publishing, 2020.

\bibitem[G98]{Gillman}
D.~Gillman.
\newblock {\em A Chernoff Bound for Random Walks on Expander Graphs},
\newblock SIAM Journal on Computing, 27(4):1203--1220, 1998.


\bibitem[GSL21]{DBLP:conf/aistats/GowerSL21}
Robert~M. Gower, Othmane Sebbouh, and Nicolas Loizou.
\newblock {SGD} for structured nonconvex functions: Learning rates,
  minibatching and interpolation.
\newblock In Arindam Banerjee and Kenji Fukumizu, editors, {\em The 24th
  International Conference on Artificial Intelligence and Statistics, {AISTATS}
  2021, April 13-15, 2021, Virtual Event}, volume 130 of {\em Proceedings of
  Machine Learning Research}, pages 1315--1323. {PMLR}, 2021.

\bibitem[Hel22]{helgason2022groups}
S.~Helgason.
\newblock {\em Groups and Geometric Analysis: Integral Geometry, Invariant
  Differential Operators, and Spherical Functions}.
\newblock Mathematical Surveys and Monographs. American Mathematical Society,
  2022.

\bibitem[KB14]{Kingma2014AdamAM}
Diederik~P. Kingma and Jimmy Ba.
\newblock Adam: A method for stochastic optimization.
\newblock {\em CoRR}, abs/1412.6980, 2014.

\bibitem[KR23]{khaled2022better}
Ahmed Khaled and Peter Richt{\'a}rik.
\newblock Better theory for {SGD} in the nonconvex world.
\newblock {\em Transactions on Machine Learning Research}, 2023.
\newblock Survey Certification.

\bibitem[Mar73]{margulis1973explicit}
G.~Margulis.
\newblock Explicit constructions of concentrators.
\newblock {\em Problemy Peredachi Informatsii}, 9(4):71--80, 1973.

\bibitem[MB11]{NIPS2011_40008b9a}
Eric Moulines and Francis Bach.
\newblock Non-asymptotic analysis of stochastic approximation algorithms for
  machine learning.
\newblock In J.~Shawe-Taylor, R.~Zemel, P.~Bartlett, F.~Pereira, and K.Q.
  Weinberger, editors, {\em Advances in Neural Information Processing Systems},
  volume~24. Curran Associates, Inc., 2011.

\bibitem[MV06]{DBLP:conf/ants/MillerV06}
Stephen~D. Miller and Ramarathnam Venkatesan.
\newblock Spectral analysis of pollard rho collisions.
\newblock In Florian Hess, Sebastian Pauli, and Michael~E. Pohst, editors, {\em
  Algorithmic Number Theory, 7th International Symposium, ANTS-VII, Berlin,
  Germany, July 23-28, 2006, Proceedings}, volume 4076 of {\em Lecture Notes in
  Computer Science}, pages 573--581. Springer, 2006.

\bibitem[Nac76]{nachbin1976haar}
L.~Nachbin.
\newblock {\em The Haar Integral}.
\newblock University series in higher mathematics. R. E. Krieger Publishing
  Company, 1976.

\bibitem[PSBR20]{prasad2020robust}
Adarsh Prasad, Arun~Sai Suggala, Sivaraman Balakrishnan, and Pradeep Ravikumar.
\newblock Robust estimation via robust gradient estimation.
\newblock {\em Journal of the Royal Statistical Society: Series B (Statistical
  Methodology)}, 82(3):601--627, 2020.

\bibitem[RL94]{rogawski1994discrete}
J.D. Rogawski and A.~Lubotzky.
\newblock {\em Discrete Groups, Expanding Graphs and Invariant Measures}.
\newblock Progress in Mathematics. Birkh{\"a}user Basel, 1994.

\bibitem[Rud76]{rudin}
Walter Rudin.
\newblock {\em Principles of Mathematical Analysis, Third edition}.
\newblock {International Series in Pure and Applied Mathematics.  McGraw-Hill, 1976}.

\bibitem[Sep06]{sepanski2006compact}
M.R. Sepanski.
\newblock {\em Compact Lie Groups}.
\newblock Graduate Texts in Mathematics. Springer New York, 2006.

\bibitem[SHK{\etalchar{+}}14]{JMLR:v15:srivastava14a}
Nitish Srivastava, Geoffrey Hinton, Alex Krizhevsky, Ilya Sutskever, and Ruslan
  Salakhutdinov.
\newblock Dropout: A simple way to prevent neural networks from overfitting.
\newblock {\em Journal of Machine Learning Research}, 15(56):1929--1958, 2014.

\bibitem[Wal13]{Wales}
David Wales
\newblock {\em Energy Landscapes}
\newblock Cambridge Molecular Science.  Cambridge University Press, 2013.

\bibitem[TCLY22]{10.1145/3551636}
Zhiyi Tian, Lei Cui, Jie Liang, and Shui Yu.
\newblock A comprehensive survey on poisoning attacks and countermeasures in
  machine learning.
\newblock {\em ACM Comput. Surv.}, 55(8), Dec 2022.

\end{thebibliography}
\end{document}